\newtheorem{theorem}{Theorem}
\newtheorem*{unnumberedtheorem}{Theorem}
\newcommand{\bt}{\begin{theorem}}
\newcommand{\et}{\end{theorem}}
\newtheorem{lemma}[theorem]{Lemma}
\newcommand{\bl}{\begin{lemma}}
\newcommand{\el}{\end{lemma}}
\newtheorem{corollary}[theorem]{Corollary}
\newcommand{\bc}{\begin{corollary}}
\newcommand{\ec}{\end{corollary}}
\newcommand{\beq}{\begin{equation}}
\newcommand{\eeq}{\end{equation}}
\newcommand{\benum}{\begin{enumerate}}
\newcommand{\eenum}{\end{enumerate}}
\newcommand{\N}{\ensuremath{ \mathbb N }}
\newcommand{\Z}{\ensuremath{\mathbb Z}}
\newcommand{\R}{\ensuremath{\mathbb R}}
\newcommand{\mcA}{\ensuremath{ \mathcal A}}
\newcommand{\mcQ}{\ensuremath{ \mathcal Q}}
\newcommand{\mcX}{\ensuremath{ \mathcal X}}
\DeclareMathOperator{\card}{\text{card}}
\newcommand{\floor}[1]{\left\lfloor #1 \right\rfloor}
\newcommand{\fp}[1]{\left\{ #1 \right\}}
\newcommand{\tf}[1]{\left[\!\left[ #1 \right]\!\right]}
\newcommand{\dtf}[1]{\left[\!\!\left[ #1 \right]\!\!\right]}
\title[Frac. parts of roots]{The sequence of fractional parts of roots}
\author[K. O'Bryant]{Kevin O'Bryant}
\address{Department of Mathematics\\
College of Staten Island (CUNY), Staten Island, NY 10314\\
and CUNY Graduate Center, New York, NY 10016}
\email{kevin@member.ams.org}
\thanks{This work grew out of discussions with Melvyn B. Nathanson, to whom I
am profoundly grateful for his problems, advice, and ideas.}
\thanks{Support for this project was provided by a PSC-CUNY Award, jointly funded by The Professional Staff Congress and The City University of New York.}
\thanks{This work will appear in \emph{Acta Arithmetica}, to whose
  referee and editor the author is grateful.}
\date{\today}
\begin{document}

\begin{abstract}
We study the function $M_\theta(n)=\floor{1/\fp{\theta^{1/n}}}$, where $\theta$ is a positive real number, $\floor{\cdot}$ and $\fp{\cdot}$ are the floor and fractional part functions, respectively. Nathanson 
proved, among other properties of $M_\theta$, that if $\log\theta$ is rational, then for all but finitely many positive integers $n$, $M_\theta(n)=\floor{n/\log\theta-1/2}$. We extend this by showing that, without condition on $\theta$, all but a zero-density set of integers $n$ satisfy  $M_\theta(n)=\floor{n/\log\theta-1/2}$. Using a metric result of Schmidt, 
 we show that almost all $\theta$ have asymptotically $(\log\theta \log x)/12$ exceptional $n \leq x$. 
 Using continued fractions, we produce uncountably many $\theta$ that have only finitely many exceptional $n$, and also give uncountably many explicit $\theta$ that have infinitely many exceptional $n$.
\end{abstract}

\subjclass[2010]{Primary: 11B83. Secondary: 11J99, 11J70}
\keywords{Fractional parts of roots, uniform distribution, continued fractions.}

\maketitle

\section{Introduction}
The author finds the identity (valid for any nonzero integer $n$)
  \begin{equation}\label{equ:root2}
   \floor{\frac{1}{e^{\sqrt{2}/n} -1 }} = \floor{\frac{n}{\sqrt{2}}-\frac12}
  \end{equation}
breathtaking. Even more perplexing is that the similar expression (see~\cite{OEIS})
  \begin{equation}\label{equ:log2}
   \floor{\frac{1}{2^{1/n} -1 }} = \floor{\frac{n}{\log(2)}-\frac12}
  \end{equation}
holds for integers $1<n<777\,451\,915\,729\,368$, but fails at both of the given endpoints.

This identity and near-identity arise in our study of the sequence of fractional parts of roots, following Nathanson~\cite{Nathanson.Mtheta}. The distribution of $(\fp{\theta^n})_{n\geq1}$, where $\theta>1$, has been the object of much study~\cite{Drmota.Tichy} but remains enigmatic except for a few peculiar $\theta$. The sequence $(\fp{\theta^{1/n}})_{n \geq 1}$ has been thought too simple to warrant study: trivially, for $\theta>1$ one has $\theta^{1/n}>1$ and $\theta^{1/n}\to 1$, and so $\fp{\theta^{1/n}}\to 0$. Nevertheless, Nathanson found interesting phenomena in the regularity with which this convergence takes place. He introduced and derived the basic properties of
  \[
   M_\theta(n) \coloneqq \floor{\frac{1}{\fp{\theta^{1/n}}}},
  \]
and identified symmetries that allow one to assume without loss of generality that $\theta>1$ and that the integer $n$ is positive. Surprisingly, he proved that for any real $\theta>1$ and integer $n> \log_2\theta$, either $M_\theta(n)=\floor{n/\log\theta-1/2}$ or $M_\theta(n)=\floor{n/\log\theta+1/2}$; moreover, if $\log\theta$ is rational, then $M_\theta(n)=\floor{n/\log\theta-1/2}$ for all sufficiently large $n$.

We will show that the set
  \begin{equation}\label{equ:atypicalintro}
   \left\{ n \in \N :  M_\theta(n) \neq \floor{\frac{n}{\log\theta}-\frac12} \right\}
  \end{equation}
has density 0 for all $\theta>1$, and for almost all $\theta>1$ has counting function asymptotic to $\frac{\log\theta}{12}\,\log n$. For $\theta<e^6\approx 400$, we give criteria for \eqref{equ:atypicalintro} to be finite or infinite in terms of the continued fraction expansions of $1/\log\theta$ and $2/\log\theta$. As a consequence, we are able to give explicit $\theta$ for which \eqref{equ:atypicalintro} is empty and is infinite. As mentioned above, Nathanson proved that for $\theta=e^{p/q}$, \eqref{equ:atypicalintro} is finite; we give another proof of this below that gives an explicit bound on the size in terms of $p$ and $q$.

In the final sections of this article, we discuss the two displayed equations at the beginning of this introduction, and update Nathanson's list of open problems for $M_\theta(n)$.

\section{Conventions, Results, Strategy}
The set of positive integers is denoted \N. Throughout, we assume that $\theta>1$ and that $n$ is a positive integer. 
If $n>\log_2\theta$, then $1<\theta^{1/n} < 2$, and so $\fp{\theta^{1/n}}= \theta^{1/n}-1$. Set
  \[
   M'_\theta(n) \coloneqq \floor{\frac{1}{\theta^{1/n}-1}},
  \]
so that $M_\theta(n)=M'_\theta(n)$ if $n>\log_2\theta$. Although we don't use it here, this sort of expression arises \cites{Golomb.Hales,Nathanson.Shat} 
in the following way.
$M'_\theta(n)$ is the largest integer $N$ such that $\theta N^n \leq (N+1)^n$; and $M'_\theta(n)$ is the largest integer $N$ such that $(1+\frac 1N)^n \geq \theta$. 
We call the elements of
  \[
   \mcA_\theta \coloneqq \left\{ n \in \N : M'_\theta(n) \neq \floor{\frac{n}{\log\theta}-\frac12}\right\}
  \]
the {\em atypical} numbers, terminology which we will justify later. Nathanson proved the following result, albeit in different notation. 
\begin{theorem}\label{FPCF:theorem:Plus or Minus}
   If $n>\log_2\theta$, then either
   \[
      M_\theta(n) = \floor{\frac{n}{\log\theta}-\frac12} \qquad\text{ and }\qquad n\not\in \mcA_\theta
   \]
   or
   \[
      M_\theta(n) = \floor{\frac{n}{\log\theta}+\frac12} \qquad\text{ and }\qquad n\in \mcA_\theta.
   \]
\end{theorem}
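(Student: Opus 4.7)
The plan is to reduce everything to an elementary analytic question via the substitution $x \coloneqq (\log\theta)/n$. The hypothesis $n > \log_2\theta$ gives $0 < x < \log 2$ and $M_\theta(n) = M'_\theta(n) = \floor{1/(\theta^{1/n}-1)} = \floor{1/(e^x-1)}$, while the two candidate values become $\floor{1/x - 1/2}$ and $\floor{1/x + 1/2}$. Since $\floor{1/x + 1/2} = \floor{1/x - 1/2} + 1$, the whole theorem reduces to showing that $1/(e^x-1)$ lies strictly between $1/x - 1/2$ and $(1/x - 1/2) + 1$, together with the (tautological) identification of which alternative corresponds to $n \in \mcA_\theta$.

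Concretely, I would prove the sandwich
\[
   0 < \frac{1}{e^x-1} - \left(\frac{1}{x}-\frac12\right) < 1 \qquad (x>0).
\]
The quickest route is via the Bernoulli expansion $1/(e^x-1) = 1/x - 1/2 + x/12 - x^3/720 + \cdots$, which makes the lower bound visible, but a self-contained derivation is equally clean: after clearing denominators, the two inequalities become
\[
   (2-x)e^x < x+2 \qquad\text{and}\qquad (x+2)e^x > 3x+2,
\]
both for $x>0$. The upper inequality is immediate from $(x+2)e^x \geq (x+2)(1+x) = x^2 + 3x + 2$. For the lower inequality, let $h(x)=(x+2)-(2-x)e^x$; a short computation shows $h(0)=h'(0)=0$ and $h''(x)=xe^x\geq0$, so $h$ is nonnegative and strictly positive for $x>0$. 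Neither of these requires the restriction $x < \log 2$, which is pleasing, though that restriction is what makes the substitution legitimate in the first place (via $\fp{\theta^{1/n}}=\theta^{1/n}-1$).

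With the sandwich in hand, $\floor{1/(e^x-1)}$ is either $\floor{1/x-1/2} = \floor{n/\log\theta - 1/2}$ or $\floor{1/x-1/2}+1 = \floor{n/\log\theta + 1/2}$. By the definition of $\mcA_\theta$, the first case is exactly $n\notin\mcA_\theta$ and the second case is exactly $n\in\mcA_\theta$, completing the theorem. The only nontrivial content is the sandwich inequality, and that is the main (though still modest) obstacle; everything else is bookkeeping around the identity $\floor{y+1}=\floor{y}+1$ and the coincidence $M_\theta(n)=M'_\theta(n)$ for $n>\log_2\theta$.
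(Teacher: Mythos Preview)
Your proof is correct and follows essentially the same route as the paper. Both arguments reduce the theorem to the sandwich inequality $0 < \dfrac{1}{e^x-1} - \left(\dfrac{1}{x}-\dfrac12\right) < 1$ for $x=(\log\theta)/n>0$; the paper packages this quantity as a function $f(t)$ and obtains the (slightly sharper) bound $0<f(t)<\tfrac12$ from monotonicity and limits, whereas you clear denominators and verify $(2-x)e^x<x+2$ and $(x+2)e^x>3x+2$ directly---a cosmetic difference only.
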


This shows that understanding $M_\theta(n)$ for nonsmall $n$ is equivalent to understanding $\mcA_\theta$. Our results are presented as properties of $\mcA_\theta$. We state the theorems here, although we define some of the terminology, such as that relating to density and continued fractions, in the proofs.

\begin{theorem}[Nathanson]\label{FPCF:theorem:rational}
 If $\log\theta=p/q>1$ is rational, then
         \[\mcA_\theta \subseteq [1,\frac{p^2}{6q}).\]
\end{theorem}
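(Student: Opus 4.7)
The plan is to expand $1/(\theta^{1/n}-1)$ to high accuracy around its leading term $n/\log\theta - 1/2$, and then use the rationality of $\log\theta$ to show that for $n \geq p^2/(6q)$ the small error term is too small to push $n/\log\theta - 1/2$ past an integer. Write $L = \log\theta = p/q$ and let $t = L/n$, so $\theta^{1/n} - 1 = e^t - 1$. The key analytic input is the classical identity
\[
   \frac{1}{e^t-1} + \frac{1}{2} = \frac{1}{2}\coth\!\left(\frac{t}{2}\right),
\]
combined with the Mittag--Leffler expansion $\coth(x) = 1/x + \sum_{k\geq 1} 2x/(x^2 + k^2\pi^2)$. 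This produces
\[
   \frac{1}{\theta^{1/n}-1} = \frac{n}{L} - \frac{1}{2} + \epsilon_n,
   \qquad
   \epsilon_n = \sum_{k=1}^\infty \frac{2t}{t^2 + 4k^2\pi^2}.
\]
All terms of the sum are positive, and each is bounded by $t/(2k^2\pi^2)$, so (summing $1/k^2 = \pi^2/6$) I get the clean two-sided estimate $0 < \epsilon_n < t/12 = L/(12n)$, valid for every $n \geq 1$.

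Next, translate atypicality into a fractional-part condition. Since $\epsilon_n \in (0,1)$, the equality $\lfloor (n/L - 1/2) + \epsilon_n \rfloor = \lfloor n/L - 1/2\rfloor$ fails precisely when $\{n/L - 1/2\} \geq 1 - \epsilon_n$. Writing $qn = kp + r$ with $0 \leq r < p$,
\[
   \left\{\frac{n}{L} - \frac{1}{2}\right\} = \begin{cases} r/p - 1/2, & r > p/2, \\ 0, & r = p/2, \\ r/p + 1/2, & r < p/2. \end{cases}
\]
So $n \in \mcA_\theta$ forces either $r > p/2$ with $\epsilon_n \geq 3/2 - r/p > 1/2$; or $r = p/2$ with $\epsilon_n \geq 1$; or $r < p/2$ with $r \geq p/2 - p\epsilon_n$.

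The first two cases, together with $\epsilon_n < L/(12n)$, force $n < L/6 \leq p^2/(6q)$ (and in fact the $r=p/2$ case never occurs). For the last case, using $\epsilon_n < p/(12qn)$ I obtain the strict inequality $r > p/2 - p^2/(12qn)$. But $r$ is an integer strictly below $p/2$, hence at most $(p-1)/2$, so
\[
   \frac{p-1}{2} \geq r > \frac{p}{2} - \frac{p^2}{12qn},
\]
which rearranges to $n < p^2/(6q)$. Combining the three cases gives $\mcA_\theta \subseteq [1, p^2/(6q))$.

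The one step that requires care is the passage from the analytic estimate to the combinatorial statement: I need both the positivity and the sharp $L/(12n)$ upper bound for $\epsilon_n$, because the margin in the inequality $r > p/2 - p^2/(12qn)$ is exactly what produces the threshold $p^2/(6q)$ at which an admissible integer $r$ just fails to exist. The Mittag--Leffler route is what makes this sharp bound come out cleanly, with the correct constant $1/12$ arising from $\sum 1/k^2 = \pi^2/6$; any coarser truncation of the Bernoulli asymptotic expansion would weaken the constant and miss the stated bound.
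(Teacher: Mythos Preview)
Your argument is correct and follows essentially the same route as the paper: establish the sharp bound $\epsilon_n = f(\log\theta/n) < (\log\theta)/(12n)$, then observe that for $n\ge p^2/(6q)$ this error is below $1/(2p)$, so no rational with denominator $p$ can land in the forbidden window. The paper compresses your three-case remainder analysis into the single remark that $\{n/\log\theta\}=\{qn/p\}$ has denominator $p$ and no such rational lies in $[1/2-1/(2p),\,1/2)$; your case split on $r$ is just this observation made explicit.

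The one genuine difference is how the bound $\epsilon_n<t/12$ is obtained: the paper uses the Bernoulli generating function and the alternating-series structure of $\sum B_{2k}t^{2k-1}/(2k)!$, whereas you use the Mittag--Leffler expansion of $\coth$. Both give exactly the same constant $1/12$, so your closing claim that ``any coarser truncation of the Bernoulli asymptotic expansion would weaken the constant and miss the stated bound'' is not right: the alternating Bernoulli series yields $f(t)<t/12$ on the nose, just as your Mittag--Leffler sum does. Neither route has an advantage in sharpness here; yours is perhaps a little more self-contained, the paper's fits the Bernoulli machinery already set up for the other results.
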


\begin{theorem}\label{FPCF:theorem:zero density}
  For all $\theta>1$, $\mcA_\theta$ has density 0.
\end{theorem}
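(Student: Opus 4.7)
The plan is to reduce atypicality to a one-dimensional Diophantine condition via a short Taylor expansion, then invoke Weyl's equidistribution theorem. I would begin by setting $\alpha = 1/\log\theta$ and substituting $x = (\log\theta)/n$ into the expansion $\frac{1}{e^x-1} = \frac{1}{x} - \frac{1}{2} + \frac{x}{12} + O(x^3)$ (valid as $x \to 0^+$), which yields
\[
  \frac{1}{\theta^{1/n}-1} = n\alpha - \frac12 + \frac{1}{12\,n\alpha} + O\!\left(\frac{1}{n^2}\right).
\]
Hence $M'_\theta(n) = \floor{n\alpha - 1/2 + \epsilon_n}$ with $\epsilon_n = \frac{1}{12\, n\alpha} + O(1/n^2)$, a small positive perturbation for large $n$.

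Next, by Theorem~\ref{FPCF:theorem:Plus or Minus}, the only way $M'_\theta(n)$ can differ from $\floor{n\alpha - 1/2}$ (for $n > \log_2\theta$) is that it exceeds it by exactly $1$. This happens exactly when $\epsilon_n$ pushes $n\alpha - 1/2$ past an integer, namely when $\fp{n\alpha - 1/2} \geq 1 - \epsilon_n$, or equivalently when $\fp{n\alpha}$ lies in the interval $[\tfrac12 - \epsilon_n,\, \tfrac12)$. So atypicality becomes membership of $n\alpha$ (mod $1$) in a tiny window of length $\epsilon_n = O(1/n)$ sitting just to the left of $1/2$.

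I would then split on whether $\log\theta$ is rational. If $\log\theta \in \Q$, Theorem~\ref{FPCF:theorem:rational} (or an analogous easy argument when $\log\theta \leq 1$) makes $\mcA_\theta$ finite, so density zero is automatic. If $\log\theta \notin \Q$, then $\alpha$ is irrational and the sequence $(n\alpha)_{n\geq 1}$ is equidistributed mod $1$ by Weyl's theorem. Given any fixed $\delta > 0$, pick $N_0$ with $\epsilon_n < \delta$ for all $n \geq N_0$; then
\[
  \#\{n \leq N : n \in \mcA_\theta\} \leq N_0 + \#\{n \leq N : \fp{n\alpha} \in [\tfrac12 - \delta,\, \tfrac12)\} = \delta N + o(N),
\]
which gives $\limsup_{N\to\infty} \tfrac{1}{N}\,\#(\mcA_\theta \cap [1,N]) \leq \delta$; letting $\delta \to 0$ concludes.

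The main obstacle is conceptual rather than technical: the atypicality window $\epsilon_n$ shrinks with $n$, so Weyl equidistribution does not apply to it directly. The crucial move is to sacrifice sharpness and replace the moving target by a fixed $\delta$-window, invoking only $\epsilon_n \to 0$ and the qualitative form of Weyl. Sharper counting results (such as the almost-sure $(\log\theta\,\log x)/12$ asymptotic announced in the abstract) will presumably demand quantitative discrepancy estimates of Schmidt type, but are unnecessary for density zero.
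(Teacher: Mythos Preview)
Your proposal is correct and essentially identical to the paper's own proof. The paper packages the Taylor expansion into the auxiliary function $f(t)=\frac{1}{e^t-1}-\frac{1}{t}+\frac{1}{2}$ and its bounds (Lemmas~\ref{FPCF:lemma:f-estimates} and~\ref{FPCF:lemma:FirstEquiv}), but the argument is the same: split on whether $\log\theta$ is rational, and in the irrational case trap $\mcA_\theta$ inside $\{n:\fp{n/\log\theta}\in[\tfrac12-\epsilon,\tfrac12)\}$ for arbitrary $\epsilon>0$ and invoke Weyl equidistribution.
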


\begin{theorem}\label{FPCF:theorem:almost all}
  For almost all $\theta>1$,
         \[ \big| \mcA_\theta \cap [1,n] \big| \sim \frac{\log\theta}{12} \log n.\]
\end{theorem}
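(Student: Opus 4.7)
The plan is to convert the atypical condition into a shrinking target problem for the sequence $\{n/\log\theta\}$ and then invoke Schmidt's metric theorem on equidistribution with shrinking intervals.

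First, I would expand $1/(\theta^{1/n}-1)$ in powers of $1/n$ using the generating function $x/(e^x-1) = \sum_{k \ge 0} B_k x^k/k!$ for the Bernoulli numbers. With $x = \log\theta/n$ this yields
\[
 \frac{1}{\theta^{1/n}-1} = \frac{n}{\log\theta} - \frac12 + \frac{\log\theta}{12\,n} + O\!\left(\frac{1}{n^3}\right),
\]
where the constant implicit in $O$ depends only on $\theta$ (valid for $n$ large enough that $\log\theta/n$ is small). Call the error $\delta_n = \log\theta/(12n) + O(n^{-3}) > 0$ (for $n$ large). Then $M'_\theta(n) = \lfloor n/\log\theta - 1/2 + \delta_n\rfloor$, and since $\delta_n > 0$, Theorem \ref{FPCF:theorem:Plus or Minus} forces $n \in \mcA_\theta$ precisely when this floor exceeds $\lfloor n/\log\theta - 1/2\rfloor$, i.e.\ when the fractional part $\{n/\log\theta - 1/2\}$ lies in $[1-\delta_n, 1)$. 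Unwinding the $-1/2$ shift, this is equivalent to $\{n/\log\theta\} \in [\tfrac12 - \delta_n, \tfrac12)$.

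Setting $\alpha = 1/\log\theta$, the problem becomes counting
\[
 N_\alpha(x) \coloneqq \#\bigl\{ n \leq x : \{n\alpha\} \in I_n \bigr\}, \qquad I_n = \bigl[\tfrac12 - \delta_n,\ \tfrac12\bigr),
\]
with $|I_n| = \log\theta/(12n) + O(n^{-3})$. The total measure is $S_x = \sum_{n \leq x} |I_n| = \tfrac{\log\theta}{12}\log x + O(1)$. Schmidt's metric theorem on the distribution of $(n\alpha)$ in variable intervals (his 1964 \emph{Metrical theorems on fractional parts of sequences}) asserts that for almost every $\alpha$,
\[
 N_\alpha(x) = S_x + O\!\bigl( S_x^{1/2} (\log S_x)^{2+\varepsilon} \bigr).
\]
In our range $S_x \asymp \log x$, so the error is $O((\log x)^{1/2}(\log\log x)^{2+\varepsilon}) = o(\log x)$, yielding $N_\alpha(x) \sim \tfrac{\log\theta}{12}\log x$. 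Since the map $\theta \mapsto 1/\log\theta$ is a $C^1$-diffeomorphism of $(1,\infty)$ onto $(0,\infty)$, a full-measure statement in $\alpha$ transfers to a full-measure statement in $\theta$.

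The main obstacle is justifying the Schmidt-type bound against the $n$-dependent targets. Two technical points require care: (i) verifying that the $O(n^{-3})$ correction to $|I_n|$ changes $N_\alpha(x)$ only by $O(1)$, so that the exact shape of $I_n$ can be replaced by the clean interval of length $\log\theta/(12n)$; and (ii) confirming that Schmidt's theorem as stated for Lebesgue-measurable sets depending on $n$ applies to our half-open intervals $I_n$ (it does, by approximation or by the discrepancy form of the result). Finitely many small $n$ where the Bernoulli expansion is not yet accurate, or where $n \leq \log_2\theta$ and $M_\theta$ differs from $M'_\theta$, contribute $O(1)$ and can be absorbed into the error.
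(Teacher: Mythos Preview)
Your reduction of the atypical condition to $\{n/\log\theta\}\in[\tfrac12-\delta_n,\tfrac12)$ matches the paper's Lemma~\ref{FPCF:lemma:FirstEquiv}, and bringing in Schmidt's theorem is exactly the paper's engine. There is, however, a real circularity you have not addressed. Your target intervals $I_n=[\tfrac12-\delta_n,\tfrac12)$ depend on $\theta$, and hence on $\alpha=1/\log\theta$ itself. Schmidt's theorem, applied to a \emph{fixed} sequence of intervals, delivers a null exceptional set of $\alpha$; but that null set depends on the interval sequence. Since your sequence is a function of $\alpha$, what you have is an uncountable family of null sets $N(\alpha)$, and the diagonal statement ``$\alpha\notin N(\alpha)$ for almost every $\alpha$'' does not follow from the individual applications. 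The diffeomorphism remark does not repair this, because there is no single full-measure set in $\alpha$ on which the desired asymptotic has been established.

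The paper closes this gap by decoupling: it fixes an auxiliary parameter $x$, uses the $\theta$-free intervals $[\tfrac12-f(\log x/k),\tfrac12)$, applies Schmidt once (per $x$) to obtain main term $(\log x/12)\log n$ for almost every $\theta$, and then squeezes using monotonicity in $x$ together with a short Riemann-integral argument applied to the measurable function $g(\theta)=\limsup_n |\mcA_\theta\cap[1,n]|/\log n$. An equivalent repair, closer to your write-up, is to run Schmidt for each rational $c>0$ with intervals $[\tfrac12-c/(12k),\tfrac12)$, intersect the countably many full-measure sets, and then sandwich using rationals $c_1<\log\theta<c_2$. Either route also makes your point~(i) moot, since the $O(n^{-3})$ discrepancy between $f(\log\theta/n)$ and $\log\theta/(12n)$ is absorbed by the sandwich; point~(ii) is not an obstacle.
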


\begin{theorem}\label{FPCF:theorem:explicit theta with no atypical numbers}
   Let $a_i$ be positive integers with $a_{2k}=1$ for $k\geq0$. Set $\ell$ to be the irrational number with simple continued fraction $[a_0;a_1,a_2,\dots]$, and set $\theta=e^{2/\ell}$. Then $\mcA_\theta=\emptyset$. In particular, if $c\in\N$ and $\theta=e^{-c+\sqrt{c(c+4)}}$, then $\mcA_\theta$ is empty.
\end{theorem}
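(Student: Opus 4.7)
My plan is to convert the condition $n\in\mcA_\theta$ into a sharp Diophantine inequality on $\ell$ and then show that the hypothesis $a_{2k}=1$ prevents it from ever being satisfied.

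First I would expand $1/(\theta^{1/n}-1)$ using the Bernoulli generating function
\[
  \frac{1}{e^x-1} = \frac{1}{x}-\frac{1}{2} + \sum_{k\geq 1}\frac{B_{2k}}{(2k)!}\, x^{2k-1},
\]
at $x=(\log\theta)/n$. Because $\log\theta=2/\ell\in(1,2)$ and $n\geq 1$, we have $0<x<2\pi$ and the tail is an alternating series with strictly decreasing magnitudes, so $1/(\theta^{1/n}-1)=n/\log\theta-1/2+\epsilon_n$ with $0<\epsilon_n<\log\theta/(12n)$. Hence $M'_\theta(n)=\floor{n/\log\theta-1/2+\epsilon_n}$, which disagrees with $\floor{n/\log\theta-1/2}$ exactly when $\fp{n/\log\theta-1/2}\in[1-\epsilon_n,1)$. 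Substituting $\log\theta=2/\ell$ and noting that $2\epsilon_n<\log\theta/(6n)=1/(3n\ell)$, this is equivalent to the existence of $m\in\Z$ with $0<2m+1-n\ell<1/(3n\ell)$; in particular, $n\in\mcA_\theta$ forces both $\lceil n\ell\rceil$ odd and $\lceil n\ell\rceil-n\ell<1/(3n\ell)$.

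Next I would invoke Legendre's theorem. Since $\ell>2/3$, we have $|\lceil n\ell\rceil/n-\ell|<1/(3n^2\ell)<1/(2n^2)$, so after reducing $\lceil n\ell\rceil/n$ to lowest terms one obtains an odd-index convergent $p_K/q_K$ of $\ell$; write $n=dq_K$ and $\lceil n\ell\rceil=dp_K$ with $d\in\N$. Then $\lceil n\ell\rceil-n\ell=d\xi_K$, where $\xi_K:=|q_K\ell-p_K|$. The standard formula $\xi_K=1/(q_{K+1}+q_K/\alpha_{K+2})$, with $\alpha_{K+2}=[a_{K+2};a_{K+3},\ldots]>1$, combined with $a_{K+1}=1$ (because $K+1$ is even by hypothesis) so that $q_{K+1}=q_K+q_{K-1}\leq 2q_K$, yields $1/\xi_K<2q_K+q_K=3q_K<3q_K\ell$, whence $\xi_K>1/(3q_K\ell)$. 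Therefore $d\xi_K>d/(3q_K\ell)=d^2/(3n\ell)\geq 1/(3n\ell)$, contradicting $\lceil n\ell\rceil-n\ell<1/(3n\ell)$. Hence $\mcA_\theta=\emptyset$.

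For the "in particular" statement, the quadratic $\ell=(1+\sqrt{1+4/c})/2$ satisfies $c\ell^2-c\ell-1=0$, hence $\ell=1+1/(c+1/\ell)$, giving the purely periodic continued fraction $[1;c,1,c,1,c,\ldots]$, which meets the hypothesis; a direct rationalization yields $2/\ell=-c+\sqrt{c(c+4)}$, so $\theta=e^{-c+\sqrt{c(c+4)}}$. The main obstacle in this argument is the bookkeeping in the first step: the Bernoulli tail $\epsilon_n$ must be controlled tightly enough that the Diophantine threshold produced is exactly $1/(3n\ell)$, because only then is the continued-fraction inequality forced by $a_{2k}=1$ sharp enough to rule it out.
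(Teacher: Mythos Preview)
Your proof is correct and follows essentially the same approach as the paper: both derive from the Bernoulli expansion the Diophantine condition that $n\in\mcA_\theta$ forces $0<p-n\ell<1/(3n\ell)$ for some integer $p$, apply Legendre's theorem to the continued fraction of $\ell=2/\log\theta$ to conclude $n=dq_K$ with $K$ odd, and then use $a_{K+1}=1$ to obtain the contradictory lower bound (your inequality $1/\xi_K<3q_K$ is exactly the paper's $\lambda_{2k}<3$, since $\lambda_{K+1}=1/(q_K\xi_K)$). The only packaging difference is that the paper isolates the first two steps as a standalone lemma, phrased in terms of $\lambda_{2k}$ rather than $\xi_K$, and records the slightly more general sufficient condition $a_{2k}\le 3a_0-2$.
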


\begin{theorem}\label{FPCF:theorem:explicit theta with many atypical numbers}
   Let $a_i$ be positive integers with $a_0=0$, $a_1=2$, $a_{2k}=4$ for all $k\geq1$. Set $\ell$ to be the irrational with simple continued fraction $[a_0;a_1,a_2,\dots]$, and set $\theta=e^{2/\ell}$. Then $\mcA$ is infinite. In particular, if $c\in\N$ and $\theta=e^{4-c+\sqrt{c(c+1)}}$, then $\mcA_\theta$ is infinite.
\end{theorem}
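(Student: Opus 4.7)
The plan is to recast atypicality as a Diophantine approximation condition on $\ell = 2/\log\theta$ and then exhibit infinitely many atypical $n$ via odd-indexed convergents of the continued fraction of $\ell$.

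Let $x = 1/\log\theta$, so $\ell = 2x$. The Bernoulli expansion $1/(e^t-1) = 1/t - 1/2 + B(t)$ with $B(t) = t/12 - t^3/720 + \cdots$ satisfies $0 < B(t) < t/12$ for small $t>0$, so $M'_\theta(n) = \floor{nx - 1/2 + B(1/(nx))}$. Combined with Theorem~\ref{FPCF:theorem:Plus or Minus}, a short case analysis (atypicality can arise only when $\fp{nx} < 1/2$) shows that $n \in \mcA_\theta$ iff $\fp{nx} \in [1/2 - B(1/(nx)), 1/2)$, equivalently iff there is an odd integer $q$ with $0 < q - n\ell \leq 2B(1/(nx)) = 1/(3n\ell) - O(1/n^3)$.

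I would then read off atypical $n$ from the convergents $p_k/q_k$ of $\ell$. For odd $k$, $p_k > q_k\ell$ and $p_k - q_k\ell = 1/(\xi_k q_k + q_{k-1})$ with $\xi_k = [a_{k+1}; a_{k+2}, \ldots]$. Two structural observations finish the job: (i) induction on $p_j = a_j p_{j-1} + p_{j-2}$ starting from $p_{-1} = 1, p_0 = 0$ shows that $a_{2j} = 4$ forces $p_{2j}$ even, hence $p_{2j+1} \equiv 1 \pmod 2$ regardless of the odd-indexed partial quotients, so $p_k$ is odd whenever $k$ is odd; (ii) for odd $k \geq 1$ one has $a_{k+1} = 4$, so $\xi_k \geq 4$ and $p_k - q_k\ell \leq 1/(4q_k)$. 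A quick continued-fraction estimate gives $\ell < 1/2$, whence $1/(4q_k) < 2/(3q_k) < 1/(3q_k\ell)$ with $\Theta(1/q_k)$ slack, which swamps the $O(1/q_k^3)$ correction in the atypicality threshold for all large $q_k$. Thus $q_k \in \mcA_\theta$ for every sufficiently large odd $k$, and $\mcA_\theta$ is infinite.

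For the explicit case $\theta = e^{4-c+\sqrt{c(c+1)}}$, I would verify that $\ell = 2/\log\theta$ has continued fraction $[0; 2, \overline{4, c}]$, placing $\theta$ in the family above. Setting $\beta = [\overline{4,c}]$, the self-similarity $\beta = 4 + 1/(c + 1/\beta)$ gives $\beta^2 - 4\beta - 4/c = 0$, so $\beta = 2 + 2\sqrt{(c+1)/c}$; then $2/\ell = 4 + 2/\beta$ simplifies to $4 - c + \sqrt{c(c+1)} = \log\theta$, as required. The main subtle point throughout is tracking the sign of the sub-leading term in $2B(1/(nx))$ to ensure the crude bound $p_k - q_k\ell \leq 1/(4q_k)$ falls below the \emph{true} atypicality threshold rather than merely its leading-order surrogate; the generous $\Theta(1/q_k)$ margin makes this routine.
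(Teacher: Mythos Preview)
Your argument is correct and, in one respect, more direct than the paper's own proof. Both proofs exhibit the odd-indexed continuants $q_{2k-1}$ of $\ell=2/\log\theta$ as atypical by first checking the main approximation inequality $\fp{2q_{2k-1}/\log\theta}\ge 1-2f(\log\theta/q_{2k-1})$ via $a_{2k}=4$; the difference lies in how the remaining condition is dispatched. The paper works through Lemma~\ref{FPCF:lemma:SecondEquiv} and its auxiliary Lemma~\ref{FPCF:lemma:continuants can be atypical}: if the second condition $\fp{q_{2k-1}/\log\theta}<1-f(\log\theta/q_{2k-1})$ were to fail, then $q_{2k-1}$ would lie in the set $\mcQ_\theta$ of good denominators for $1/\log\theta$, and this is then ruled out by computing the continued fraction $[0;2a_1,a_2/2,2a_3,\dots]$ of $1/\log\theta$, relating its continuants $S_j$ to those of $\ell$ (one finds $S_{2j+1}=2B_{2j+1}$), and deriving a size contradiction. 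You instead stay with Lemma~\ref{FPCF:lemma:FirstEquiv} and recast the full atypicality criterion as the existence of an \emph{odd} integer $q$ with $0<q-n\ell\le 2f(\log\theta/n)$; the evenness of $a_{2k}$ then gives, by a short parity induction on the recurrence $p_j=a_jp_{j-1}+p_{j-2}$, that the numerators $p_{2k-1}$ are odd, which settles the matter without ever introducing $\mcQ_\theta$ or the continued fraction of $1/\log\theta$. Your route is slicker for this particular theorem; the paper's detour through $\mcQ_\theta$ and Lemma~\ref{FPCF:lemma:continuants can be atypical} is more modular and would apply in situations where the numerator parity is not so transparent. Your verification of the periodic case $\ell=[0;2,\overline{4,c}]$ matches the paper's.
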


The last two theorems give explicit uncountable families of $\theta$ with $\mcA_\theta$ empty and infinite, and also draw attention to even more explicit countable subfamilies. The simplest examples are that $\mcA_{e^{\sqrt5-1}}$ is empty and $\mcA_{e^{2\sqrt5}}$ is infinite. The actual results proved are inequalities on the partial quotients of the continued fraction, and the specific $a_i$ given in these theorems are not the only $a_i$ that satisfy the inequalities. Our countable families consist entirely of transcendental numbers; we do not know if there is an algebraic $\theta$ with $\mcA_\theta=\emptyset$, nor if there is an algebraic $\theta$ with $\mcA_\theta$ infinite.

We now outline our approach. We first obtain an asymptotic expansion
  \[
   \frac{1}{\theta^{1/n}-1} = \frac{n}{\log\theta}-\frac12 + f\left(\frac{\log\theta}{n}\right)
  \]
for a very small positive function $f$. The floor of the left hand side is $M'_\theta(n)$, and the floor of the right hand side is $\floor{n/\log\theta-1/2}$ unless $n/\log\theta-1/2$ is within $f(\log\theta/n)$ of an integer. We are thus led to a nonhomogeneous diophantine approximation problem that we can partially handle with continued fractions. In particular, we will need to know the simple continued fractions of both $1/\log\theta$ and $2/\log\theta$. By defining $\theta$ through the continued fraction of $2/\log\theta$ we are able to set, or at least control, the size of $\mcA_\theta$.

\section{Bernoulli numbers and $n$th roots}

We use the generating function for the sequence
$(B_k)_{k=0}^{\infty}$ of Bernoulli numbers to obtain an asymptotic expansion of $M'_\theta(n)$.
For $|t| < 2\pi$,
\beq   \label{FPCF:Bern}
\frac{t}{e^t-1} = \sum_{k=0}^{\infty} \frac{B_k}{k!} t^k
 = 1 -\frac{1}{2}t + \frac{1}{12}t^2 - \frac{1}{720}t^4
 + \sum_{k=3}^{\infty} \frac{B_{2k}}{(2k)!} t^{2k}.
\eeq
For $t > 0$, we define the function
\beq  \label{FPCF:f(t)}
f(t) = \frac{1}{e^t-1} -  \frac{1}{t} + \frac{1}{2}.
\eeq

\bl      \label{FPCF:lemma:f-estimates}
For $t > 0$, the function $f(t)$
is strictly increasing,
$\lim_{t\rightarrow 0^+} f(t) = 0$, 
and
$\lim_{t\rightarrow \infty} f(t) = 1/2.$
If $0 < t <     1$, then
\beq  \label{FPCF:f-ineq}
\frac{t}{12} - \frac{t^3}{720} < f(t) < \frac{t}{12}.
\eeq
\el

\begin{proof}
The function $f(t)$ is strictly increasing (because $f'(t) > 0$),
$\lim_{t\rightarrow 0^+} f(t) = 0$ (apply l'H\^{o}pital's rule twice),
and
$\lim_{t\rightarrow \infty} f(t) = 1/2.$

For $0 < t < 2\pi$, we have the power series
\beq  \label{FPCF:powerseries}
f(t) = \frac{1}{12}t - \frac{1}{720}t^3 + \sum_{k=3}^{\infty} \frac{B_{2k}}{(2k)!} t^{2k-1}.
\eeq
The Bernoulli numbers satisfy the classical identity (\cite{Euler}, \cite{rade73}*{formula~(9.1)})
\[
\frac{B_{2k}}{(2k)!} =  \frac{2(-1)^{k-1}}{(2\pi)^{2k}} \sum_{n=1}^{\infty} \frac{1}{n^{2k}}.
\]
It follows that for $0 < t < 1$ the sequence
\[
 \left( \frac{|B_{2k}|}{(2k)!} t^{2k-1} \right)_{k=1}^{\infty}
\]
is strictly decreasing and tends to 0,
hence~\eqref{FPCF:powerseries} is an alternating series
and~\eqref{FPCF:f-ineq} follows.
This completes the proof.
\end{proof}

\begin{lemma}     \label{FPCF:lemma:FirstEquiv}
Either
  \[
    M'_{\theta}(n) = \floor{ \frac{n}{\log\theta}  - \frac{1}{2} }
    \text{ or }
    M'_{\theta}(n) = \floor{ \frac{n}{\log\theta}  + \frac{1}{2} },
  \]
and $M'_{\theta}(n) = \floor{n/\log \theta + 1/2}$
if and only if
\beq     \label{FPCF:BadFracPart2}
\frac{1}{2} -  f\left( \frac{\log\theta}{n}  \right)
\leq \fp{ \frac{n}{\log\theta} } <  \frac{1}{2} .
\eeq
\end{lemma}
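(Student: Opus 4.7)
The plan is to turn the identity that defines $f$ into an exact expansion for $1/(\theta^{1/n}-1)$, and then read off the value of $M'_\theta(n)$ by a case split on $\fp{n/\log\theta}$.

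Substituting $t=(\log\theta)/n$ into \eqref{FPCF:f(t)} and rearranging gives the exact identity
\[
   \frac{1}{\theta^{1/n}-1} = \frac{n}{\log\theta} - \frac12 + f\!\left(\frac{\log\theta}{n}\right),
\]
so $M'_\theta(n)=\floor{n/\log\theta-1/2+f(\log\theta/n)}$. Lemma~\ref{FPCF:lemma:f-estimates} provides the key two-sided bound $0<f(\log\theta/n)<1/2$, which pins the argument of the floor strictly between $n/\log\theta-1/2$ and $n/\log\theta$. Writing $n/\log\theta=N+s$ with $N\in\Z$ and $s=\fp{n/\log\theta}\in[0,1)$, I would then split on whether $s\geq 1/2$ or $s<1/2$. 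A short inspection shows that $s-1/2+f(\log\theta/n)$ lies in $(0,1)$ in the first case, so the floor equals $N$; and in $(-1/2,1/2)$ in the second case, so the floor equals $N-1$ or $N$ according to whether $s<1/2-f(\log\theta/n)$ or $s\geq 1/2-f(\log\theta/n)$. Matching these three possibilities against $\floor{n/\log\theta\pm 1/2}$ yields both conclusions of the lemma: $M'_\theta(n)$ is always one of the two stated floors, and it equals the larger one $\floor{n/\log\theta+1/2}$ precisely when $1/2-f(\log\theta/n)\leq \fp{n/\log\theta}<1/2$.

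There is no genuine obstacle here: once the expansion is in hand and $f$ is known to lie in $(0,1/2)$, only bookkeeping remains. The real payoff is structural---the lemma converts the problem of understanding $M'_\theta$ into an inhomogeneous diophantine-approximation question about $\fp{n/\log\theta}$, which the later theorems on $\mcA_\theta$ will attack using density arguments, Schmidt's metric theorem, and continued fractions of $1/\log\theta$ and $2/\log\theta$.
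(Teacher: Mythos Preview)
Your proposal is correct and takes essentially the same approach as the paper: both start from the exact identity $1/(\theta^{1/n}-1)=n/\log\theta-1/2+f(\log\theta/n)$ obtained from \eqref{FPCF:f(t)}, invoke the bound $0<f<1/2$ from Lemma~\ref{FPCF:lemma:f-estimates}, and then do a case analysis on the fractional part. The only cosmetic difference is that the paper decomposes $n/\log\theta-1/2$ into its floor and fractional part and asks when the extra floor term $\lfloor\{n/\log\theta-1/2\}+f\rfloor$ equals $1$, whereas you write $n/\log\theta=N+s$ and split directly on $s\gtrless 1/2$; these are two ways of organizing the same three cases.
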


Note that Theorem~\ref{FPCF:theorem:Plus or Minus} is a direct consequence of Lemma~\ref{FPCF:lemma:FirstEquiv}.

\begin{proof}
By the definition of $f$, we have
\begin{align*}
 \frac{1}{\theta^{1/n}-1}
    &= \frac{n}{\log\theta} - \frac{1}{2} + f\left( \frac{\log\theta}{n}  \right) \\
    &= \floor{ \frac{n}{\log\theta} - \frac{1}{2} } + \fp{ \frac{n}{\log\theta} - \frac{1}{2} } + f\left( \frac{\log\theta}{n}  \right)
\end{align*}
and so
\[
M'_{\theta}(n) = \floor{ \frac{n}{\log\theta} - \frac{1}{2} }
+ \floor{ \fp{ \frac{n}{\log\theta} - \frac{1}{2} }
 + f\left( \frac{\log\theta}{n}  \right) }.
\]
As fractional parts are between 0 and 1 while $f$ is between $0$ and $1/2$, we can now see that either
$M'_\theta(n) = \floor{n/\log\theta-1/2}$ or $M'_\theta(n)=\floor{n/\log\theta-1/2}+1$, which is the first claim of this lemma.
Moreover, $M'_{\theta}(n) = \floor{n/\log \theta + 1/2}$
if and only if
\[
1 \leq \fp{ \frac{n}{\log\theta} - \frac{1}{2} }
 + f\left( \frac{\log\theta}{n}  \right)  < 2.
\]
By Lemma~\ref{FPCF:lemma:f-estimates}, for $t > 0$ we have  $0 < f\left(t \right) < 1/2$, and if $1/2<\fp{t-1/2}$ then $\fp{t-1/2}=\fp{t}+1/2$, and so
\[
\frac{1}{2} < 1 -  f\left( \frac{\log\theta}{n}  \right)
\leq \fp{ \frac{n}{\log\theta} - \frac{1}{2} }
= \fp{ \frac{n}{\log\theta} } + \frac{1}{2} < 1.
\]
This implies~\eqref{FPCF:BadFracPart2}.

Conversely, inequality~\eqref{FPCF:BadFracPart2}
implies that
\[
1 \leq \fp{ \frac{n}{\log\theta} } + \frac{1}{2}
+   f\left( \frac{\log\theta}{n}  \right)
=  \fp{ \frac{n}{\log\theta} - \frac{1}{2} }
+   f\left( \frac{\log\theta}{n} \right) < \frac{3}{2} < 2.
\]
This completes the proof.
\end{proof}

\bl     \label{FPCF:lemma:FracBound}
If $0 \leq a < b \leq 1$, then
\begin{multline*}
\left\{ t \in \R: \frac{a}{2} \leq \fp{t} < \frac{b}{2}   \right\}
  = \\
\{t \in \R:  a \leq \fp{2t} < b\}
 \setminus
 \left\{ t \in \R: \frac{a+1}{2} \leq \fp{t} < \frac{b+1}{2}   \right\}.
\end{multline*}
\el

\begin{proof}
If $ \fp{t} < \frac{1}{2}$, then $\fp{2t} = 2\fp{t}$ and so $a \leq \fp{2t} < b$ if and only if $a/2 \leq \fp{t} < b/2$.
If $ \fp{t} \geq \frac{1}{2}$, then $\fp{2t} = 2\fp{t}-1$ and so $a \leq \fp{2t} < b$ if and only if $(a+1)/2 \leq \fp{t} < (b+1)/2$.
Thus,
\begin{align*}
\{t \in \R: &  a \leq \fp{2t} < b \} \\
=  & \left\{t \in \R: \fp{t} < \frac{1}{2} \text{ and }
  \frac{a}{2}  \leq \fp{t} < \frac{b}{2}  \right\} \\
& \hspace{2cm}  \cup \left\{t \in \R: \fp{t} \geq \frac{1}{2}  \text{ and } \frac{a+1}{2}
  \leq \fp{t} < \frac{b+1}{2}   \right\} \\
= & \left\{t \in \R: \frac{a}{2}  \leq \fp{t} < \frac{b}{2}  \right\}
 \cup \left\{t \in \R: \frac{a+1}{2} \leq \fp{t} < \frac{b+1}{2}   \right\}.
\end{align*}
The Lemma follows from the observation that the two sets on the right side of this equation are disjoint.
\end{proof}

Combining Lemmas~\ref{FPCF:lemma:FirstEquiv}  and~\ref{FPCF:lemma:FracBound} proves the following result.

\begin{lemma}      \label{FPCF:lemma:SecondEquiv}
We have $n\in\mcA_\theta$ if and only if both
   \[
   \fp{ \frac{2n}{\log \theta} }
   \geq 1 - 2f\left(  \frac{\log \theta}{n} \right)
   \]
and
   \[
   \fp{ \frac{n}{\log \theta} }
   < 1 - f\left(  \frac{\log \theta}{n} \right).
   \]
\end{lemma}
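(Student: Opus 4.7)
The plan is to combine Lemmas~\ref{FPCF:lemma:FirstEquiv} and~\ref{FPCF:lemma:FracBound} by feeding the former into the latter with a specific choice of parameters. By Lemma~\ref{FPCF:lemma:FirstEquiv}, the condition $n\in\mcA_\theta$ is equivalent to
\[
  \frac{1}{2} - f\!\left(\frac{\log\theta}{n}\right) \leq \fp{\frac{n}{\log\theta}} < \frac{1}{2},
\]
so the task is to re-express this narrow window on $\fp{n/\log\theta}$ just below $1/2$ in terms of $\fp{2n/\log\theta}$, which is precisely what Lemma~\ref{FPCF:lemma:FracBound} is built to do.

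To set up the substitution, I would take $t = n/\log\theta$, $a = 1 - 2f(\log\theta/n)$, and $b = 1$. The window condition above is then exactly $a/2 \leq \fp{t} < b/2$. The hypothesis $0 \leq a < b \leq 1$ demanded by Lemma~\ref{FPCF:lemma:FracBound} follows from Lemma~\ref{FPCF:lemma:f-estimates}, which gives $0 < f(\log\theta/n) < 1/2$ for $\log\theta/n > 0$.

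Applying Lemma~\ref{FPCF:lemma:FracBound} rewrites $a/2 \leq \fp{t} < b/2$ as the conjunction of ``$a \leq \fp{2t} < b$'' and the failure of ``$(a+1)/2 \leq \fp{t} < (b+1)/2$''. The first clause, with $b=1$, reduces to $\fp{2n/\log\theta} \geq 1 - 2f(\log\theta/n)$, the upper bound being automatic. For the negated clause, $(b+1)/2 = 1$ and $\fp{n/\log\theta} < 1$ always, so the negation reduces to $\fp{n/\log\theta} < (a+1)/2 = 1 - f(\log\theta/n)$. These two inequalities are precisely the lemma's conclusion.

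There is no real obstacle here: the proof is a mechanical substitution verifying that the chosen $(a,b)$ turn Lemma~\ref{FPCF:lemma:FirstEquiv}'s characterization of $\mcA_\theta$ into the advertised pair of conditions. The only minor bookkeeping is checking that $a \geq 0$, which amounts to the upper bound $f(\log\theta/n) \leq 1/2$ supplied by Lemma~\ref{FPCF:lemma:f-estimates}, and noting that the two ``automatic'' upper bounds in the unpacking are genuinely automatic because fractional parts are always in $[0,1)$.
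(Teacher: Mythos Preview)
Your proposal is correct and follows exactly the route the paper indicates: the paper's entire proof is the single sentence ``Combining Lemmas~\ref{FPCF:lemma:FirstEquiv} and~\ref{FPCF:lemma:FracBound} proves the following result,'' and you have simply spelled out that combination with the choice $t=n/\log\theta$, $a=1-2f(\log\theta/n)$, $b=1$. The only bookkeeping, as you note, is that Lemma~\ref{FPCF:lemma:f-estimates} guarantees $0<f<1/2$ so that $0\leq a<b\leq 1$ holds.
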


\section{Proofs of Theorems~\ref{FPCF:theorem:rational},~\ref{FPCF:theorem:zero density}, and~\ref{FPCF:theorem:almost all}}

Nathanson proved that $\mcA_{e^{p/q}}$ is finite; while his proof is different from the one here, it could also be pushed to give this bound.
\begin{proof}[Proof of Theorem~\ref{FPCF:theorem:rational}.]
Assume that $n\geq p^2/(6q)$. Then by Lemma~\ref{FPCF:lemma:f-estimates}
  \[ f\left(\frac{\log\theta}{n}\right)<\frac{\log\theta}{12n} \leq \frac{1}{2p},\]
and so there is no rational strictly between $1/2-f(\log\theta/n)$ and $1/2$ with denominator $p$. But clearly $\fp{n/\log\theta}$ has denominator $p$, and so Lemma~\ref{FPCF:lemma:FirstEquiv} tells us that $n\not\in \mcA_\theta$; that is, $\mcA_\theta\subseteq[1,p^2/(6q))$.
\end{proof}

The set $\mcX$ of positive integers has \emph{density $\epsilon$} if
  \[
    \lim_{N\rightarrow \infty}  \frac{\card\left(\left\{ x\in \mcX : x \leq N \right\} \right)}{N}
  \]
exists and is equal to $\epsilon$. We use the following results concerning density. If a set $\mcX$ is a subset of a set with density $\epsilon$ for every small $\epsilon>0$, then $\mcX$ has density 0. Let $0\leq a<b<1$ and let $\alpha$ be any irrational; the set $\left\{ n \in \N : a \leq \fp{n\alpha} < b\right\}$ has density $b-a$.

\begin{proof}[Proof of Theorem~\ref{FPCF:theorem:zero density}.]
If $\log\theta$ is rational, then by Theorem~\ref{FPCF:theorem:rational}, we know that $\mcA_\theta$ is finite. As finite sets have density 0, this case is handled.

Now suppose that $\log\theta$ is irrational. Take small $\epsilon>0$, and take $n_0=\log\theta/(12\epsilon)$ so that for all $n>n_0$ we have $f(\log\theta/n) < \frac{\log\theta}{12 n} <\epsilon$. Lemma~\ref{FPCF:lemma:FirstEquiv} now implies that
  \[
   \mcA_\theta \subseteq \left\{n\in\N : \frac12-\epsilon \leq \fp{\frac{n}{\log\theta}} < \frac 12 \right\}.
  \]
Since $\log\theta$ is irrational, $\fp{n/\log\theta}$ is uniformly distributed, and so $\mcA_\theta$ is contained in a set with density $\epsilon$. As $\epsilon$ was arbitrary, it follows that $\mcA_\theta$ has density 0.
\end{proof}

The main tool for Theorem~\ref{FPCF:theorem:almost all} is a result of Schmidt~\cite{schm64}*{Theorem 1}, which we state a special case of below. Let $\tf{P}$ be 1 if $P$ is true, and 0 if $P$ is false, and let $|I|$ be the length of the interval $I$.
\begin{unnumberedtheorem}[Schmidt]
   Let $I_1\supseteq I_2\supseteq\dots$ be a nested sequence of subintervals of $[0,1)$, $\epsilon>0$, then for almost all $\alpha$
      \[
         \sum_{k=1}^n \tf{ \fp{k\alpha} \in I_k} = \sum_{k=1}^n |I_k| + O\left( (\sum_{k=1}^n |I_k|)^{1/2+\epsilon}\right).
      \]
\end{unnumberedtheorem}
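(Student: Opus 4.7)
The plan is to use a second moment method on $[0,1)$ together with a Gál--Koksma type lemma that converts variance estimates into almost-sure bounds. First I would set $X_k(\alpha) = \tf{\fp{k\alpha} \in I_k} - |I_k|$, which has mean zero on $[0,1)$, so that $S_N(\alpha) \coloneqq \sum_{k=1}^N \tf{\fp{k\alpha}\in I_k} - \sum_{k=1}^N |I_k|$ satisfies $\int_0^1 S_N = 0$. Writing $\Psi(N) = \sum_{k=1}^N |I_k|$, the target inequality becomes $S_N(\alpha) = O(\Psi(N)^{1/2+\epsilon})$ for a.e.\ $\alpha$.

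The heart of the argument is the variance bound. Using the Fourier expansion of the indicator of $I_k = [a_k,b_k)$ minus its measure,
\[
   X_k(\alpha) = \sum_{m \neq 0} c^{(k)}_m\, e^{2\pi i m k \alpha},
   \qquad |c^{(k)}_m| \leq \min\!\left(|I_k|,\, \tfrac{1}{\pi|m|}\right),
\]
one expands
\[
   \int_0^1 S_N(\alpha)^2\, d\alpha
   = \sum_{1\leq k,l\leq N}\ \sum_{\substack{m_1,m_2\neq 0 \\ m_1 k = m_2 l}} c^{(k)}_{m_1}\, \overline{c^{(l)}_{m_2}}.
\]
The diagonal terms $k=l$ contribute at most $\sum_k |I_k|(1-|I_k|) \leq \Psi(N)$ via Parseval. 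For the off-diagonal terms, the nesting hypothesis $I_1 \supseteq I_2 \supseteq \cdots$ is crucial: it lets one compare each $c^{(k)}_m$ to a coefficient attached to the smaller index in a pair $(k,l)$ via summation by parts, and the resonance condition $m_1 k = m_2 l$ forces pairs to lie in a thin, divisor-controlled set. A careful estimate yields $\int_0^1 S_N^2\, d\alpha \ll \Psi(N)\, (\log \Psi(N))^A$ for some absolute $A$.

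Next I would invoke the Davenport--Erd\H{o}s--LeVeque / Gál--Koksma lemma: given a nondecreasing $\Psi$ with $\int_0^1 S_N^2\, d\alpha \ll \Psi(N)(\log \Psi(N))^A$, for almost every $\alpha$ one has $S_N(\alpha) = O(\Psi(N)^{1/2+\epsilon})$. The standard implementation is to select a dyadic subsequence $N_j$ with $\Psi(N_j) \asymp 2^j$, apply Chebyshev plus Borel--Cantelli at the $N_j$, and then bridge the gaps by a maximal inequality. Here the nesting again helps: monotonicity of $\mathbf{1}_{I_k}$ in $k$ lets one sandwich the partial sum $S_N$ between $S_{N_j}$ and $S_{N_{j+1}}$ with an error controlled by a second variance bound over the block, so no genuine maximal inequality beyond Chebyshev is needed.

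The main obstacle is the off-diagonal variance estimate: the coupling $m_1 k = m_2 l$ must be handled so that the cross-terms $c^{(k)}_{m_1} \overline{c^{(l)}_{m_2}}$ sum to only $O(\Psi(N)\,\text{polylog})$ rather than the trivial bound $\Psi(N)^2$. Without nesting, tailored intervals can produce coherent cancellation failures; with nesting, Abel summation on the index $k$ transforms the cross-sum into a weighted sum of $\sum_{k\leq M} |c^{(k)}_m|^2$-type quantities whose divisor-sum structure can be handled by elementary number theory. This delicate Fourier bookkeeping is the only essential subtlety; the Borel--Cantelli conclusion is then routine.
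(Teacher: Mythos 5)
First, note what you are being compared against: the paper does not prove this statement at all --- it is quoted verbatim (as a special case) from Schmidt \cite{schm64}*{Theorem 1}, and Schmidt's theorem in fact holds for an \emph{arbitrary} sequence of intervals, with no nesting hypothesis; the nesting appears in the paper only because that is the special case used later. So your sketch has to stand on its own, and it has a genuine gap at its core: the variance bound $\int_0^1 S_N^2\,d\alpha \ll \Psi(N)(\log\Psi(N))^A$ is asserted ("a careful estimate yields"), not proved, and the tools you name do not plausibly deliver it. Writing $d=\gcd(k,l)$, $k=dk'$, $l=dl'$, the resonance $m_1k=m_2l$ forces $m_1=tl'$, $m_2=tk'$, and the triangle inequality with $|c^{(k)}_m|\le\min(|I_k|,1/(\pi|m|))$ gives off-diagonal contributions of GCD-sum type. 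In exactly the regime this paper needs, $|I_k|\asymp c/k$ and $\Psi(N)\asymp c\log N$, the terms with $t=1$ and $d\gtrsim c$ alone contribute about $\sum_{d\ge c} (c^2/d^2)\bigl(\log(N/d)\bigr)^2 \asymp c(\log N)^2 \asymp \Psi(N)^2/c$, which is nowhere near $\Psi(N)\,\mathrm{polylog}\,\Psi(N)$. So absolute-value (triangle-inequality) estimates cannot give your claimed second-moment bound; you would need genuine cancellation among the resonant cross-terms, and neither "Abel summation using nesting" nor "elementary divisor-sum bookkeeping" identifies a mechanism for it. That nesting is not the mechanism is confirmed by the fact that Schmidt's theorem needs no nesting; his actual proof is a long, intricate argument (dyadic decomposition in both the index range and the interval lengths, with an inductive overlap estimate), not a clean second moment plus Gál--Koksma. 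The step you dismiss as "the only essential subtlety" is, in effect, the entire theorem.

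A secondary, fixable issue: even granting a good variance bound, your bridging step is misstated. Choosing $N_j$ with $\Psi(N_j)\asymp 2^j$ and sandwiching via monotonicity loses an error $\Psi(N_{j+1})-\Psi(N_j)\asymp\Psi(N_j)$, which swamps $\Psi(N_j)^{1/2+\epsilon}$. The standard Gál--Koksma/Davenport--Erd\H{o}s--LeVeque route requires block estimates $\int_0^1|S_N-S_M|^2\,d\alpha \ll (\Psi(N)-\Psi(M)+1)\,\mathrm{polylog}$ uniformly in $M<N$ (or a much denser set of test points), and those block bounds are again exactly the unproved core. As written, the proposal is a plan whose decisive estimate is both unproven and, in the form stated, false for the trivial bound you would obtain; to repair it you would essentially have to reproduce Schmidt's argument, which is why the paper simply cites it.
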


\begin{proof}[Proof of Theorem~\ref{FPCF:theorem:almost all}.]
   We apply Schmidt's theorem with $\alpha$ replaced by $1/\log\theta$, take $x>1$ and intervals
      \[
         I_k = \left[ \frac12 - f\left(\frac{\log x}{k}\right), \frac12 \right),
      \]
   which are properly nested since $f(t)$ is increasing for $t>0$.
   Since $f(t)=t/12+O(t^3)$ (as $t\to0$), as $n\to\infty$ we have
      \[
         \sum_{k=1}^n |I_k| = \sum_{k=1}^n f\left(\frac{\log x}{k}\right)
            = \sum_{k=1}^n \left(\frac{\log x}{12k}+O(1/k^3)\right)
            = \frac{\log x}{12} \log n + O(1).
      \]

   By Lemma~\ref{FPCF:lemma:FirstEquiv}, for $\theta<x$,
      \begin{align*}
         \mcA_\theta&=\left\{ k : \fp{\frac{k}{\log\theta}} \in [\frac12-f\left(\frac{\log\theta}{k}\right),\frac12 ) \right\} \\
            &\subseteq \left\{ k : \fp{\frac{k}{\log\theta}} \in [\frac12-f\left(\frac{\log x}{k}\right),\frac12 ) \right\} \\
            &= \left\{ k : \fp{\frac{k}{\log\theta}} \in I_k \right\},
      \end{align*}
   and so 
      \[ \left| A_\theta \cap [1,n] \right| \leq \sum_{k=1}^n \dtf{\fp{\frac{k}{\log \theta}} \in I_k}.\]
   Similarly, for $\theta>x$
      \[ \left| A_\theta \cap [1,n] \right| \geq \sum_{k=1}^n \dtf{\fp{\frac{k}{\log \theta}} \in I_k}.\]
      
Set
      \[ g(\theta) \coloneqq \limsup_{n\to\infty} \frac{\big| A_\theta\cap[1,n] \big|}{\log n}, \]
which must be Lebesgue measurable since its definition makes no appeal to the axiom of choice. One may verify the measurability of $g$ more directly by observing that, for fixed $n$, the preimages of $\theta \mapsto A_\theta \cap [1,n]$ are unions of half-open intervals, and so each $\theta \mapsto  \frac{| A_\theta\cap[1,n] |}{\log n}$ is a simple measurable function, and so $g(\theta)$ is the $\limsup$ of a sequence of simple measurable functions, and so is itself measurable.

Schmidt's theorem implies: for all $x>1$, almost all $\theta<x$ satisfy
      \[
         g(\theta) \leq \frac{\log x}{12},
      \]
   and almost all $\theta>x$ satisfy
      \[
         g(\theta) \geq \frac{\log x}{12}.
      \]

Now consider the integral
      \begin{equation} \label{equ:integral}
         \int_{1}^{x} \left(g(\theta)-\frac{\log\theta}{12}\right)\,d\theta.
      \end{equation}
Let $1=x_0<x_1<\dots<x_N=x$ be evenly spaced from 1 to $x$. We have
      \begin{align*}
         \int_{1}^{x} \left(g(\theta)-\frac{\log\theta}{12}\right)\,d\theta
            &= \sum_{i=0}^{N-1} \int_{x_i}^{x_{i+1}} \left(g(\theta)-\frac{\log\theta}{12}\right)\,d\theta \\
            &\leq \sum_{i=0}^{N-1} \int_{x_i}^{x_{i+1}} \left(\frac{\log x_{i+1}}{12}-\frac{\log\theta}{12}\right)\,d\theta,
      \end{align*}
which goes to 0 as $N\to\infty$ since $\log\theta/12$ is Riemann integrable over $[1,x]$. Similarly, we find that~\eqref{equ:integral} is at least 0, whence $g(\theta)=\log\theta/12$ for almost all $\theta$ less than $x$, and $x$ is arbitrary.
\end{proof}

We note that LeVeque~\cite{leve76} constructed $\alpha$ with
   \[
      \limsup_{n\to\infty} \frac{1}{\log n} \sum_{k=1}^n \tf{\fp{k\alpha} < 1/k} = \infty,
   \]
showing that the ``almost all'' in Schmidt's theorem cannot be improved to ``all''. While LeVeque's construction, using continued fractions, does not immediately carry over to intervals that do not contain 0, we believe that the same phenomenon affects us. That is, we believe that for any function $g(n)\to0$, there is a $\theta$ so that $\big| A_\theta\cap[1,n] \big| > n \cdot g(n)$ for infinitely many $n$.

\section{Continued fractions and the proofs of Theorems~\ref{FPCF:theorem:explicit theta with no atypical numbers} and~\ref{FPCF:theorem:explicit theta with many atypical numbers}.}

The continued fraction algorithm produces a positive integer
from a real number $\alpha > 1$ by taking the integer part
of the reciprocal of the fractional part of $\alpha$.
This is exactly how the function $M_{\theta}(n)$ operates
on the $n$th root of a real number $\theta > 1$,
so it is, perhaps, not surprising that there is a relationship
between continued fractions and the fractional parts of roots.

We shall consider infinite continued fractions of the form
$[a_0;a_1,a_2,\ldots]$ with partial quotients $a_0 \in \Z$
and $a_k \in \N$ for all $k\in \N$.
Then $\alpha = [a_0;a_1,a_2,\ldots]$ is a real irrational number whose  \emph{$k$th convergent} is the rational number
\[
\frac{A_k}{B_k}  = [a_0;a_1,a_2,\ldots, a_k]
\]
where $A_k$ and $B_k$ are relatively prime positive integers. Also, set
  \[
   \lambda_k \coloneqq [0; a_{k-1},a_{k-2},\dots,a_1]+[a_{k};a_{k+1},a_{k+2}, \dots].
  \]
We follow the notation of Rockett and Sz\"{u}sz~\cite{rock-szus92}, and use some results that are found there but not in the other standard references.
The sequence of denominators, sometimes called \emph{continuants}, satisfies $B_k \geq F_{k+1}$, the $(k+1)$th Fibonacci number. Further,
\beq   \label{FPCF:ii}
\frac{A_{2k-2}}{B_{2k-2}}  < \frac{A_{2k}}{B_{2k}} < \alpha
<  \frac{A_{2k+1}}{B_{2k+1}}  < \frac{A_{2k-1}}{B_{2k-1}}
\eeq
and
\beq   \label{FPCF:iii}
  \alpha - \frac{A_k}{B_k}  = \frac{(-1)^k}{B_k^2 \lambda_{k+1}}.
\eeq
This is often used in conjuction with the trivial bounds
  \[
   a_{k+1} < \lambda_{k+1} < a_{k+1}+2.
  \]
If $m$ and $n$ are positive integers and
\beq     \label{FPCF:iv}
\left| \alpha - \frac{m}{n} \right| \leq \frac{1}{2n^2},
\eeq
then~\cite{rock-szus92}*{Theorem II.5.1} there are integers $k\geq0,c\geq 1$ such that $m=cA_k$ and $n=cB_k$ and $\lambda_{k+1}>2c^2$.

\begin{lemma}   \label{FPCF:lemma:atypical n are continuants}
Let $1 < \theta < e^3$ with $\log\theta$ irrational,
and $a_k, B_k, \lambda_k$ be associated to the continued fraction of $2/\log\theta$.
For each $n\in \mcA_\theta$, there exist positive integers $c,k$ such that $n=c B_{2k-1}$ and $\lambda_{2k} > \frac{6c^2}{\log \theta}$.
\end{lemma}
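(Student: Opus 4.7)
The plan is to translate the first clause of Lemma~\ref{FPCF:lemma:SecondEquiv} into a classical Diophantine approximation statement for $\alpha\coloneqq 2/\log\theta$, use the Rockett--Sz\"{u}sz criterion quoted just before~\eqref{FPCF:iv} to recognize the approximating fraction as (a multiple of) a convergent, and finally sharpen the automatic bound $\lambda_{k+1}>2c^2$ to the desired $\lambda_{2k}>6c^2/\log\theta$ using the exact identity~\eqref{FPCF:iii}.

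First I would unpack Lemma~\ref{FPCF:lemma:SecondEquiv}: for $n\in\mcA_\theta$, the inequality $\fp{n\alpha}\ge 1-2f(\log\theta/n)$ (with $2f<1$) produces a unique positive integer $m$ satisfying $0<m-n\alpha\le 2f(\log\theta/n)$. Applying $f(t)<t/12$ from Lemma~\ref{FPCF:lemma:f-estimates} together with the hypothesis $\log\theta<3$ and dividing by $n$ yields
\[
 0<\frac{m}{n}-\alpha<\frac{\log\theta}{6n^2}\le\frac{1}{2n^2},
\]
which places $m/n$ in the hypothesis of the cited Rockett--Sz\"{u}sz theorem. That theorem produces integers $k\ge 0$ and $c\ge 1$ with $m=cA_k$, $n=cB_k$, and $\lambda_{k+1}>2c^2$.

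Next I would pin down the parity of $k$: because $m/n>\alpha$, the chain~\eqref{FPCF:ii} forces $A_k/B_k$ to be an overshooting convergent, which means $k$ is odd, so I write $k=2k'-1$ and obtain $n=cB_{2k'-1}$. To upgrade the $\lambda$-bound, I combine the exact identity $|A_k/B_k-\alpha|=1/(B_k^2\lambda_{k+1})$ from~\eqref{FPCF:iii} with the strict upper bound $m/n-\alpha<\log\theta/(6n^2)$; substituting $n=cB_k$ and clearing denominators rearranges immediately to $\lambda_{2k'}>6c^2/\log\theta$, which is the claim.

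The only subtlety I anticipate is the availability of $f(t)<t/12$ over the full range $t=\log\theta/n$ arising from $n\in\mcA_\theta$. Lemma~\ref{FPCF:lemma:f-estimates} supplies this inequality directly whenever $t<1$, i.e.\ $n>\log\theta$, which under $\log\theta<3$ covers every $n\ge 3$; a short monotonicity check (or direct numerical verification, since $f(3)\approx 0.219<0.25$) extends the inequality across the narrow remaining range of $t$, handling $n\in\{1,2\}$ should they happen to lie in $\mcA_\theta$. Beyond this the argument is essentially bookkeeping on the continued-fraction data of $\alpha$; it is worth noting that the second condition of Lemma~\ref{FPCF:lemma:SecondEquiv}, on $\fp{n/\log\theta}$, is \emph{not} invoked in this direction, and will presumably enter only when one tries to \emph{produce} atypical $n$ in Theorems~\ref{FPCF:theorem:explicit theta with no atypical numbers} and~\ref{FPCF:theorem:explicit theta with many atypical numbers}.
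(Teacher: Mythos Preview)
Your argument is correct and matches the paper's proof essentially step for step: translate the first inequality of Lemma~\ref{FPCF:lemma:SecondEquiv} into $0<m/n-\alpha<\log\theta/(6n^2)\le 1/(2n^2)$ via $f(t)<t/12$, invoke the Rockett--Sz\"{u}sz criterion to get $m=cA_k$, $n=cB_k$, identify $k$ as odd from~\eqref{FPCF:ii}, and then feed~\eqref{FPCF:iii} back into the sharper upper bound to upgrade $\lambda_{2k}>2c^2$ to $\lambda_{2k}>6c^2/\log\theta$. Your observation about the range $t\ge 1$ (for $n\le\log\theta$) is in fact a point the paper glosses over, so your numerical patch for $t\in[1,3)$ makes the argument slightly more careful than the original; and you are right that the second clause of Lemma~\ref{FPCF:lemma:SecondEquiv} plays no role here.
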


\begin{proof}
Let $n\in \mcA_\theta$, i.e., $M'_\theta(n) = \floor{n/\log\theta+1/2}$.
By Theorem~\ref{FPCF:lemma:SecondEquiv}
  \[
    1 - 2f\left(  \frac{\log \theta}{n} \right)
    \leq  \fp{ \frac{2n}{\log \theta} } < 1.
  \]
Let $m = 1 + \floor{2n/\log\theta}$.
Applying  the upper bound in Lemma~\ref{FPCF:lemma:f-estimates}
with $t = \log\theta/n$, we obtain
\[
0 < 1 -  \fp{ \frac{2n}{\log \theta} }
= m - \frac{2n}{\log \theta}
\leq 2f\left(  \frac{\log \theta}{n} \right)  < \frac{\log\theta}{6n}
< \frac{1}{2n}
\]
and so
\[
0 < \frac{m}{n} -  \frac{2}{\log \theta} < \frac{1}{2n^2} .
\]
Properties~\eqref{FPCF:ii} and~\eqref{FPCF:iv}
of continued fractions imply that
$m/n$ is an odd convergent to $2/\log\theta$.
Thus, there exist
positive integers $k$ and $c$ with $\lambda_{2k}>2c^2$ such that
$m = c A_{2k-1}$ and $n=c B_{2k-1}$.
It follows from property~\eqref{FPCF:iii} that
  \[
    \frac{1}{B_{2k-1}^2 \lambda_{2k}} =
    \frac{A_{2k-1}}{B_{2k-1}} -  \frac{2}{\log \theta}
    < \frac{\log\theta}{6c^2 B_{2k-1}^2}
  \]
and so $\lambda_{2k} > 6c^2/\log\theta$, which makes the earlier restriction $\lambda_{2k}>2c^2$ redundant.
This completes the proof.
\end{proof}

\begin{proof}[Proof of Theorem~\ref{FPCF:theorem:explicit theta with no atypical numbers}.] Let $a_0\geq 1$ and $a_{2k}\leq 3a_0-2$ for $k\geq1$, $\ell=[a_0;a_1,\dots]$, $\theta=e^{2/\ell}$. Then $0<\log\theta<2/a_0\leq 2$, and so $\theta$ satisfies the hypotheses of Lemma~\ref{FPCF:lemma:atypical n are continuants}. Consequently, for each $n\in\mcA_\theta$, there are positive integers $c,k$ such that $n=cB_{2k-1}$ and $\lambda_{2k}>6c^2/\log\theta$. But
   \[
      \lambda_{2k}
         = [0;a_{2k-1},a_{2k-2},\dots,a_1]+[a_{2k};a_{2k+1},\dots]
         < a_{2k}+2 \leq 3a_0
   \]
while
   \[
      \frac{6c^2}{\log\theta} = 3c^2\ell \geq 3a_0.
   \]
Therefore, there are {no} $n$ in $\mcA_\theta$.

Set $a_{2k}=1$ for $k\geq0$, and let the $a_{2k-1}$ be arbitrary positive integers, to see the first family stated in Theorem~\ref{FPCF:theorem:explicit theta with no atypical numbers}. Set $a_{2k+1}=c$, an arbitrary positive integer, for $k\geq0$ to get
   \[\ell=[1;c,1,c,1,\dots] = \frac{c+\sqrt{c(c+4)}}{2c}\]
and
   \[\theta= e^{-c+\sqrt{c(c+4)}}.\]
\end{proof}

Set
  \[
   {\mathcal Q}_\theta \coloneqq \left\{cB_{2i-1} : \text{$i$ and $c$ positive integers, $2c^2< \lambda_{2i}$} \right\}
  \]
where $B_i,\lambda_i$ correspond to the continued fraction of $1/\log\theta$ (not of $2/\log\theta$). By properties \eqref{FPCF:iii} and \eqref{FPCF:iv} of continued fractions,
   \[
      {\mathcal Q}_\theta = \left\{ n : n\geq 1, \text{ there  exists an integer $m$ with $0<m-\frac{n}{\log\theta} < \frac{1}{2n}$}\right\}.
   \]
In particular, ${\mathcal Q}_\theta$ is a set of good denominators for approximating $1/\log\theta$.

Our next lemma identifies continuants of $2/\log\theta$ that are either also good denominators for $1/\log\theta$ or are exceptional. When we apply the lemma in the proof of Theorem~\ref{FPCF:theorem:explicit theta with many atypical numbers}, we will have additional constraints that prevent the continuants from also being good denominators for $1/\log\theta$, and thereby force them to be exceptional.

\begin{lemma}   \label{FPCF:lemma:continuants can be atypical}
Let $1 < \theta < e^6$ with $\log\theta$ irrational,
and $a_k, B_k, \lambda_k$ be associated to the continued fraction of $2/\log\theta$.
For $0 < \delta < \log\theta$, choose $k_0 = k_0(\delta) \geq 3$ such that
\beq   \label{FPCF:BadIneq1}
B_{2k-1}^2 > \frac{(\log\theta)^3}{60\delta}
\eeq
for all $k \geq k_0$.
If $k\geq k_0$ and 
\beq   \label{FPCF:BadIneq2}
\lambda_{2k} \geq \frac{6}{\log\theta - \delta}
\eeq
then
   \[B_{2k-1}\in {\mathcal Q}_\theta \cup \mcA_\theta.\]
\end{lemma}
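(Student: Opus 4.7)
The plan is to set $n = B_{2k-1}$, use identity~\eqref{FPCF:iii} to compute $\fp{2n/\log\theta}$ exactly, verify the first condition of Lemma~\ref{FPCF:lemma:SecondEquiv}, and then split on the parity of $A_{2k-1}$ to place $n$ in either $\mcQ_\theta$ or $\mcA_\theta$. Since $2k-1$ is odd, \eqref{FPCF:iii} gives $A_{2k-1}/n - 2/\log\theta = 1/(n^2\lambda_{2k}) > 0$, so $A_{2k-1}$ is the ceiling of $2n/\log\theta$ and $\fp{2n/\log\theta} = 1 - 1/(n\lambda_{2k})$. The first condition of Lemma~\ref{FPCF:lemma:SecondEquiv} therefore reduces to the inequality $2nf(\log\theta/n) \geq 1/\lambda_{2k}$.

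I would verify this inequality as follows. Since $k \geq k_0 \geq 3$, we have $n \geq B_5 \geq F_6 = 8 > \log\theta$, so $t = \log\theta/n$ lies in $(0,1)$ and the lower bound of Lemma~\ref{FPCF:lemma:f-estimates} applies, yielding
\[
2nf\!\left(\frac{\log\theta}{n}\right) > \frac{\log\theta}{6} - \frac{(\log\theta)^3}{360\,n^2}.
\]
Hypothesis~\eqref{FPCF:BadIneq1} forces the error term to be less than $\delta/6$, so $2nf(\log\theta/n) > (\log\theta - \delta)/6$, and then hypothesis~\eqref{FPCF:BadIneq2} gives $2nf(\log\theta/n) > 1/\lambda_{2k}$ as required.

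Now dichotomize on $A_{2k-1} \bmod 2$. If $A_{2k-1} = 2m$, then $m - n/\log\theta = 1/(2n\lambda_{2k})$, which lies strictly between $0$ and $1/(2n)$, the second inequality because $\lambda_{2k} \geq 6/(\log\theta - \delta) > 1$ (using $\log\theta < 6$). By the characterization of $\mcQ_\theta$ displayed just before the lemma, $n \in \mcQ_\theta$. If instead $A_{2k-1} = 2m+1$, then $n/\log\theta = m + 1/2 - 1/(2n\lambda_{2k})$, so $\fp{n/\log\theta} = 1/2 - 1/(2n\lambda_{2k}) < 1/2 < 1 - f(\log\theta/n)$ since $f < 1/2$ by Lemma~\ref{FPCF:lemma:f-estimates}. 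This is the second condition of Lemma~\ref{FPCF:lemma:SecondEquiv}, and combined with the first condition already verified it places $n \in \mcA_\theta$.

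The main obstacle is not really computational but conceptual: one must notice that the parity of $A_{2k-1}$ cleanly distinguishes the two conclusions. The inequalities involving $f$ and $\lambda_{2k}$ are essentially forced by the hypotheses, and the parity split makes the verification of the second condition of Lemma~\ref{FPCF:lemma:SecondEquiv} either trivial (odd case) or fail in a way that instead exhibits $n$ as a good denominator for $1/\log\theta$ (even case).
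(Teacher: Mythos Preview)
Your proof is correct, and the first half (computing $\fp{2n/\log\theta}$ via~\eqref{FPCF:iii} and verifying the first condition of Lemma~\ref{FPCF:lemma:SecondEquiv} through the chain $2nf(\log\theta/n) > (\log\theta-\delta)/6 \geq 1/\lambda_{2k}$) is essentially identical to the paper's. The second half is organized differently: the paper splits on whether the second condition of Lemma~\ref{FPCF:lemma:SecondEquiv} holds, and in the failure case bounds $1-\fp{n/\log\theta}$ from above by $f(\log\theta/n) < \log\theta/(12n) < 1/(2n)$ (this is where $\theta<e^6$ is used) to land in $\mcQ_\theta$. You instead split on the parity of $A_{2k-1}$, which lets you compute $\fp{n/\log\theta}$ \emph{exactly} in each case: near $1/2$ when $A_{2k-1}$ is odd (so the second condition holds trivially), and near $1$ when $A_{2k-1}$ is even (so $m=A_{2k-1}/2$ witnesses $n\in\mcQ_\theta$ directly). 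The two dichotomies are in fact equivalent---the first-condition inequality $f(\log\theta/n)>1/(2n\lambda_{2k})$ forces the second condition to fail precisely in the even case---but your organization is a bit cleaner since it avoids the upper bound on $f$ entirely. One small remark: when you assert that $A_{2k-1}$ is the ceiling of $2n/\log\theta$, you should note that $1/(n\lambda_{2k})<1$ (immediate from $n\geq 8$ and $\lambda_{2k}>a_{2k}\geq 1$); you establish the relevant facts later but use the conclusion earlier.
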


\begin{proof}
As $k \geq 3$, we have
$B_{2k-1} \geq B_{5} \geq 8$
and $0 < \log\theta/B_{2k-1} \leq 6/8 < 1$.
Continued fraction inequalities~\eqref{FPCF:ii}
and~\eqref{FPCF:iii} give
\[
0 <  \frac{A_{2k-1}}{B_{2k-1}}  - \frac{2}{\log\theta}
= \frac{1}{\lambda_{2k}B_{2k-1}^2},
\]
whence, with $\lambda_{2k} >a_{2k} \geq 1$,
    \[ 0 < A_{2k-1} - \frac{2B_{2k-1}}{\log\theta} = \frac{1}{\lambda_{2k}B_{2k-1}} < \frac 18.\]
It follows that $2B_{2k-1}/\log\theta$ is slightly less than an integer, and therefore
\beq     \label{FPCF:SimpleIneq}
\fp{  \frac{2B_{2k-1}}{\log\theta}  } = 1 - \frac{1}{\lambda_{2k}B_{2k-1}}.
\eeq
Assuming that $\lambda_{2k}$ and $B_{2k-1}$ satisfy the inequalities in~\eqref{FPCF:BadIneq1}
and~\eqref{FPCF:BadIneq2}, we have
\begin{align*}
\frac{1}{\lambda_{2k}B_{2k-1}}
& \leq \frac{1}{B_{2k-1}}\left( \frac{\log\theta - \delta}{6}  \right) \\
& = \frac{\log\theta/B_{2k-1}}{6}
- \left( \frac{60 \delta B_{2k-1}^2}{(\log\theta)^3} \right)
\frac{ (\log\theta/B_{2k-1})^3 }{360} \\
& < \frac{\log\theta/B_{2k-1}}{6} - \frac{ (\log\theta/B_{2k-1})^3 }{360}  \\
& = 2 \left( \frac{\log\theta/B_{2k-1}}{12} -  \frac{(\log\theta/B_{2k-1})^3}{720}  \right) \\
& < 2f\left( \frac{\log\theta}{B_{2k-1}}\right),
\end{align*}
where the last inequality uses the lower bound in Lemma~\ref{FPCF:lemma:f-estimates}
with $t = \log\theta/B_{2k-1}$.
Combining this with~\eqref{FPCF:SimpleIneq} gives
\[
\fp{  \frac{2B_{2k-1}}{\log\theta} } > 1 - 2 f\left( \frac{\log\theta}{B_{2k-1}}\right).
\]

If, further, $\fp{B_{2k-1}/\log\theta}<1-f(\log\theta/B_{2k-1})$, then by Lemma~\ref{FPCF:lemma:SecondEquiv}, we have $B_{2k-1}\in \mcA_\theta$. We therefore assume that
   \begin{equation} \label{FPCF:BadIneq3}
      \fp{\frac{B_{2k-1}}{\log\theta}}\geq 1-f\left(\frac{\log\theta}{B_{2k-1}}\right)
   \end{equation}
and need to show that $B_{2k-1}\in {\mathcal Q}_\theta$. Define $b_i$ through \[\frac{1}{\log\theta}=[b_0;b_1,b_2,\dots],\] and denote the convergents  of $1/\log\theta$ by $R_i/S_i$, and set
    \[\tau_k=[0;b_{k-1},\dots,b_1] + [b_k;b_{k+1},b_{k+2},\dots].\]
We need to prove that $B_{2k-1} =c S_{2i-1}$ for some $c,i \in \N$ and $2c^2<\tau_{2i}$.

Inequality~\eqref{FPCF:BadIneq3} implies that
\[
0 < 1 - \fp{  \frac{B_{2k-1}}{\log\theta} } \leq f\left(\frac{\log\theta}{B_{2k-1}}\right) < \frac{\log\theta}{12 B_{2k-1}}.
\]
Let $r = 1 + \floor{ B_{2k-1}/\log\theta }.$
Then
\[
0 < r -  \frac{B_{2k-1}}{\log\theta} < \frac{\log\theta}{12 B_{2k-1}}
< \frac{1}{2B_{2k-1}}
\]
because $\log\theta < 6$, and so
\[
0 < \frac{r}{B_{2k-1}} -  \frac{1}{\log\theta} < \frac{1}{2B_{2k-1}^2}.
\]
This implies that $r/B_{2k-1}$ is an oddth convergent to $1/\log\theta$, i.e., there are positive integers $c,i$ with $B_{2k-1} = c S_{2i-1}$ and $2c^2<\tau_{2i}$.
This completes the proof of this lemma.
\end{proof}

\begin{proof}[Proof of Theorem~\ref{FPCF:theorem:explicit theta with many atypical numbers}.]
Set $a_0=0,a_1=2$, $a_{2k}=4$ for all $k\geq1$, and let the $a_{2k+1}$ be arbitrary positive integers, giving us uncountably many options; the choices $a_{2k+1}=c$ lead to $\theta=e^{4-c+\sqrt{c(c+1)}}$. Define $\theta$ through
    \[
    \frac{2}{\log\theta} = [a_0;a_1,a_2,\dots]
    \]
which is clearly irrational, and let $B_k$ be its continuants.
Now,
    \[
    [0;2,4]<\frac{2}{\log\theta} = [ a_0;a_1,a_2,\dots ]=[0;2,4,a_3,4,a_5,\dots] < [0;2]
    \]
and so $e^4<\theta < e^{9/2}<e^6$. We take $\delta=2$, and since
   \[
      \frac{(\log\theta)^3}{60\delta} < 1
   \]
we may take $k_0=3$. As
    \[
    \lambda_{2k}>a_{2k} = 4 > \frac{6}{4-2} > \frac{6}{\log\theta-2},
    \]
Lemma~\ref{FPCF:lemma:continuants can be atypical} tells us that $B_{2k-1}$ (for $k\geq 3$) is in $\mcQ_\theta\cup\mcA_\theta$. We will show that $B_{2k-1}$ is not in $\mcQ_\theta$, and this will prove that $\mcA_\theta$ is infinite.

Let $S_k$ denote the $k$th convergent to $1/\log\theta$. Since $a_{2k}$ is always even, we have
   \[\frac{1}{\log\theta} = \frac12 \, \frac{2}{\log\theta} = \frac12\cdot [a_0;a_1,a_2,a_3,\dots] = [\frac{a_0}{2};2a_1,\frac{a_2}{2},2a_3,\dots].\]
That is, the simple continued fraction of $1/\log\theta=[b_0;b_1,\dots]$ where $b_0=0$, $b_1=4$, $b_{2k}=2$ and $b_{2k+1}=2a_{2k+1}$ for $k\geq 1$. We have
$S_0 = B_0 = 1$, $S_1=2B_1=4$, $S_2=B_2=9$, and the recursion relations for $k\geq 2$
\begin{align*}
B_{2k} & = 4B_{k-1} + B_{k-2} \\
B_{2k-1} &= a_{2k-1}B_{2k-2}+B_{2k-3} \\
S_{2k} & = 2S_{2k-1} + S_{2k-2} \\
S_{2k-1} & = 2a_{2k-1}S_{2k-2} + S_{2k-3}.
\end{align*}
These imply that  $B_{2k} = S_{2k}$ and $2B_{2k+1} = S_{2k+1}$ for all $k\geq 0$.

If $B_{2k-1}\in \mcQ_\theta$, then there are positive integers $c,i$ with $B_{2k-1}=c S_{2i-1}$ and $\tau_{2i}>2c^2$, where
   \[\tau_i \coloneqq [0,b_{i-1},\dots,b_1]+[b_i;b_{i+1},b_{i+2},\dots].\]
Clearly, $\tau_{2i}<b_{2i}+2=4$, so that necessarily $c=1$.
If $k \geq 3$ and $B_{2k-1} = S_{2 i-1}$ for some $i \geq 1$,
then $B_{2k-1} = S_{2i-1}  = 2B_{2i-1} $ and so $i < k$.
But then
   \[
   2B_{2i-1}=B_{2k-1} >B_{2k-2}+B_{2k-3} = (4B_{2k-3}+B_{2k-4})+B_{2k-3} > 5B_{2k-3}\geq 5B_{2i-1}
   \]
which is absurd. Therefore, there are no such $c,i$, and therefore $B_{2k-1}\not\in\mcQ_\theta$.
\end{proof}

\section{The identities stated in the first paragraph, $\theta=2$ and $\theta=e^{\sqrt{2}}$}
Set  $\theta = e^{\sqrt{2}}$.
Because
\[
\frac{2}{\log \theta} = \sqrt{2} = [1;2,2,2,\ldots]
\]
and for all $k \geq 1$
\[
\lambda_{2k} < 4 <  3\sqrt{2} = \frac{6}{\log \theta},
\]
Lemma~\ref{FPCF:lemma:atypical n are continuants} tells us that $\mcA_\theta$ is empty. By definition, $M'_{\theta}(n) = \floor{n/\log \theta - 1/2}$ for all $n\in\N$, and $M_\theta(n)= \floor{n/\log \theta - 1/2}$ for all $n\geq 3>\log_2\theta =\sqrt{2}/\log2$.

In the first sentence of this paper, we claimed that $M'_\theta(n)=\floor{n/\log \theta - 1/2}$ for all nonzero $n$, which we deduce now from the positive $n$ case. Assume $n>0$. Since $\floor{-x}=-\floor{x}-1$ for positive nonintegers $x$,
  \begin{align*}
    M'_\theta(-n) &= \floor{\frac{1}{\theta^{1/-n}-1}} \\
                  &= \floor{\frac{-\theta^{1/n}}{\theta^{1/n}-1}} \\
                  &= \floor{\frac{-\theta^{1/n}}{\theta^{1/n}-1}+\frac{\theta^{1/n}-1}{\theta^{1/n}-1} -1 } \\
                  &= \floor{- \frac{1}{\theta^{1/n}-1}} - 1 \\
                  &= -\floor{\frac{1}{\theta^{1/n}-1}} - 2  \\
                  &= - M'_\theta(n)-2,
  \end{align*}
making of use of the Gelfand-Schneider Theorem to be certain that
 \[\frac{1}{\theta^{1/n}-1}= \frac{1}{e^{\sqrt{2}/n}-1}\]
is not an integer. Continuing,
  \begin{align*}
      M'_\theta(-n) = -M'_\theta(n)-2
                  &= -\floor{\frac{n}{\log\theta}-\frac12}-2 \\
                  &= \floor{-\left( \frac{n}{\log\theta}-\frac12\right)} -1 \\
                  &= -\floor{ \frac{-n}{\log\theta}-\frac12},
  \end{align*}
where we have used the value and irrationality of $\log\theta=\sqrt{2}$ to guarantee that $n/\log\theta-1/2$ is positive and not an integer.
This establishes~\eqref{equ:root2} for negative $n$.

Now, set $\theta=2$, and take $n$ so that $n\in \mcA_\theta$. As $\log2$ is irrational and $\frac{2}{\log2}<3$, we can apply Theorem~\ref{FPCF:lemma:atypical n are continuants} to deduce that there are positive integers $c,k$ such that $n=cB_{2k-1}$ and $\lambda_{2k}>6c^2/\log2$ (where $B_i,\lambda_i$ correspond to the continued fraction of $2/\log2$). It is not difficult to compute $\lambda_2,\lambda_4,\dots,\lambda_{34}$ and find that only $\lambda_{2}$ is greater than $6/\log2$. Therefore, our only candidate for $\mcA_2$ less than $B_{35}=777\,451\,915\,729\,368$ is $B_1=1$ (we have to consider the multiples $cB_1$ with $8.73>\lambda_2>6c^2/\log2 > 8.65 c^2$, that is, $c=1$). Direct calculation shows that in fact $B_1$ and $B_{35}$ are both in $\mcA_\theta$. This completes our justification of the claims made in our opening paragraph. This is essentially the same as the computation of sequence A129935 in the OEIS \cite{OEIS}.

\section{More Problems}
Nathanson~\cite{Nathanson.Mtheta}*{Section 5} gives a list of problems concerning $M_\theta(n)$. Several of these problems are solved (explicitly or implicitly) in the current work, but those concerning small $n$ or letting $\theta$ vary are not addressed here. To his list, we add the following problems:

\begin{enumerate}
 \item Is $\mcA_{e^e}$ infinite?
 \item Are there $\theta,\tau$ with both $\mcA_\theta$ and $\mcA_\tau$ infinite, but the symmetric difference $\mcA_\theta \triangle \mcA_\tau$ finite? 
 \item For every $\theta_0$, are there uncountably many $\theta>\theta_0$ with $\mcA_\theta$ finite?
 \item What is the Hausdorff dimension of
   \(
      \left\{ \theta>1: \mcA_\theta\text{ is finite}\right\}?
   \)
  \item Is there any algebraic $\theta$ for which $\mcA_\theta$ can be proved finite? Infinite?
\end{enumerate}

\begin{bibdiv}
\begin{biblist}
  \bib{Drmota.Tichy}{book}{
   author={Drmota, Michael},
   author={Tichy, Robert F.},
   title={Sequences, discrepancies and applications},
   series={Lecture Notes in Mathematics},
   volume={1651},
   publisher={Springer-Verlag},
   place={Berlin},
   date={1997},
   pages={xiv+503},
   isbn={3-540-62606-9},
   review={\MR{1470456 (98j:11057)}},
}
\bib{Euler}{article}{
   author = {Euler, L.},
   title = {De summis serierum reciprocarum},
   journal = {Commentarii academiae scientiarum Petropolitanae},
   volume = {7},
   date = {1740},
   pages = {123--134},
   note = {Article E041 in the MAA's Euler Archive},
   eprint = {http://eulerarchive.maa.org/pages/E041.html},
}
\bib{Golomb.Hales}{article}{
   author={Golomb, Solomon W.},
   author={Hales, Alfred W.},
   title={Hypercube tic-tac-toe},
   conference={
      title={More games of no chance},
      address={Berkeley, CA},
      date={2000},
   },
   book={
      series={Math. Sci. Res. Inst. Publ.},
      volume={42},
      publisher={Cambridge Univ. Press},
      place={Cambridge},
   },
   date={2002},
   pages={167--182},
   review={\MR{1973012 (2004c:91030)}},
   eprint = {http://library.msri.org/books/Book42/files/golomb.pdf},
}
\bib{leve76}{article}{
   author={LeVeque, W. J.},
   title={On the frequency of small fractional parts in certain real
   sequences. IV},
   journal={Acta Arith.},
   volume={31},
   date={1976},
   number={3},
   pages={231--237},
   issn={0065-1036},
   review={\MR{0435009 (55 \#7971)}},
}

\bib{Nathanson.Mtheta}{article}{
   author={Nathanson, Melvyn B.},
   title={On the fractional parts of roots of positive real numbers},
   journal={Amer. Math. Monthly},
   volume={120},
   date={2013},
   number={5},
   pages={409--429},
   issn={0002-9890},
   review={\MR{3035441}},
   doi={10.4169/amer.math.monthly.120.05.409},
  eprint = {arXiv:1112.1759v2 [math.NT]},
}

\bib{Nathanson.Shat}{article}{
  author = {Nathanson, Melvyn B.},
  title = {Shatrovskii's construction of thin bases},
  date = {June 6, 2009},
  eprint = {arXiv:0906.1241},
  }
\bib{OEIS}{article}{
  author = {OEIS Foundation Inc. (2012)},
  title = {The On-Line Encyclopedia of Integer Sequences},
  eprint = {http://oeis.org/A129935},
}
\bib{rade73}{book}{
   author={Rademacher, Hans},
   title={Topics in analytic number theory},
   note={Edited by E. Grosswald, J. Lehner and M. Newman;
   Die Grundlehren der mathematischen Wissenschaften, Band 169},
   publisher={Springer-Verlag},
   place={New York},
   date={1973},
   pages={ix+320},
   review={\MR{0364103 (51 \#358)}},
}
\bib{rock-szus92}{book}{
   author={Rockett, Andrew M.},
   author={Sz\"{u}sz, Peter},
   title={Continued fractions},
   publisher={World Scientific Publishing Co. Inc.},
   place={River Edge, NJ},
   date={1992},
   pages={x+188},
   isbn={981-02-1047-7},
   review={\MR{1188878 (93m:11060)}},
}
\bib{schm64}{article}{
   author={Schmidt, Wolfgang M.},
   title={Metrical theorems on fractional parts of sequences},
   journal={Trans. Amer. Math. Soc.},
   volume={110},
   date={1964},
   pages={493--518},
   issn={0002-9947},
   review={\MR{0159802 (28 \#3018)}},
}
\end{biblist}
\end{bibdiv}


\end{document}